\newtheorem{lemma}{Lemma}[section]
\newtheorem{example}{Example}[section]
\newtheorem{theorem}{Theorem}[section]
\newtheorem{remark}{Remark}[section]
\begin{document}

\title[New characterizations of minimal cusco maps]{New characterizations of minimal cusco  maps}
\author[\v Lubica Hol\'a \and Du\v san Hol\'y]{\v Lubica Hol\'a* \and Du\v san Hol\'y**}

\newcommand{\acr}{\newline\indent}

\address{\llap{*}Academy of Sciences, Institute of Mathematics \acr \v
Stef\'anikova 49,
81473 Bratislava, Slovakia
\acr Slovakia}

\email{hola@mat.savba.sk}

\address{\llap{**\,}Faculty of Industrial Technologies in P\'{u}chov \\
Tren\v{c}\'{\i}n University of Alexander Dub\v{c}ek in
Tren\v{c}{\'{\i}}n
\\I. Krasku 491/30, 02001 P\'{u}chov, Slovakia}

\email{holy@fpt.tnuni.sk}

\thanks{}

\subjclass[2010]{Primary 54C60; Secondary 54B20}
\keywords{minimal cusco map, quasicontinuous function, subcontinuous function, set-valued mapping, selection, extreme function. Both authors are thankful to grant APVV-0269-11,  \v L. Hol\'a would like to thank to grant Vega 2/0047/10}

\begin{abstract}
We give new characterizations of minimal cusco maps in the class of all set-valued maps extending results from [BZ1] and [GM]. Let $X$ be a topological space and $Y$ be a Hausdorff locally convex linear topological space. Let $F: X \to Y$ be a set-valued map. The following are equivalent: (1) $F$ is minimal cusco; (2) $F$ has nonempty compact values, there is a quasicontinuous, subcontinuous selection $f$ of $F$ such that $F(x) = \overline{co}\overline f(x)$ for every $x \in X$; (3) $F$ has nonempty compact values, there is a densely defined  subcontinuous, quasicontinuous selection $f$ of $F$ such that $F(x) = \overline{co}$$\overline f(x)$ for every $x \in X$; (4) $F$ has nonempty compact  convex values, $F$ has a closed graph, every extreme function of $F$ is quasicontinuous, subcontinuous and any two extreme functions of $F$ have the same closures of their graphs. Some applications to known results are given.
\end{abstract}

\maketitle

\section{Introduction}
\bigskip
The acronym usco (cusco) stands for a (convex) upper semicontinuous non-empty compact-valued set-valued map. Such set-valued maps are interesting because they describe common features of maximal monotone operators, of the convex subdifferential and of Clarke generalized gradient. Examination of cuscos and uscos leads to serious insights into the underlying topological properties of the convex subdifferential and the Clarke generalized gradient. (It is known that Clarke subdifferential of a locally Lipschitz function and, in particular, the subdifferential of a convex continuous functions are weak* cuscos.) (see [BZ1])

In our paper we are interested in minimal usco and minimal cusco maps. Minimal usco and minimal cusco maps are used in many papers (see [BZ1], [BZ2], [DL], [GM], [HH], [Wa]). We give new characterizations of minimal usco and minimal cusco maps in the class of all set-valued maps using  densely defined subcontinuous quasicontinuous selections. We have also a new characterization of minimal cusco maps using extreme selections. Notice that all known characterizations of minimal usco (cusco) maps are given in the class of usco (cusco) maps (see {BZ1], [GM]). Our approach gives a possibility to construct a minimal usco (cusco) map very easily.

\bigskip
\section{Minimal cusco maps}
\bigskip
In what follows let $X, Y$ be Hausdorff topological spaces, $\Bbb R$ be the space of real numbers with the usual metric and $Z^+$ be the set of positive integers. Also, for $x \in X$, $\mathcal U(x)$ is always used to denote a base of open neighborhoods of $x$ in $X$. The symbol $\overline A$ and $Int A$ will stand for the closure and interior of the set $A$ in a topological space.

A set-valued map, or multifunction, from $X$ to $Y$ is a function that assigns to each element of $X$ a subset of $Y$. If $F$ is a set-valued map from $X$ to $Y$, then its graph is the set  $\{(x,y) \in X \times Y: y \in F(x)\}$. Conversely, if $F$ is a subset of $X \times Y$ and $x \in X$, define $F(x) = \{y \in Y: (x,y) \in F\}$. Then we can assign to each subset $F$ of $X \times Y$ a set-valued map which takes the value $F(x)$ at each point $x \in X$ and which graph is $F$. In this way, we identify set-valued maps with their graphs. Following [DL] the term map is reserved for a set-valued map.

Notice that if $f: X \to Y$  is a single-valued function, we will use the symbol $f$ also for the graph of $f$.

\bigskip
Given two maps $F, G: X \to Y$, we write $G \subset F$ and say that $G$ is contained in $F$ if $G(x) \subset F(x)$ for every $x \in X$.

A map $F: X \to Y$ is upper semicontinuous at a point $x \in X$ if for every open set $V$ containing $F(x)$, there exists $U \in \mathcal U(x)$ such that
\bigskip

\centerline{$F(U) = \cup \{F(u):u \in U\} \subset V.$}
\bigskip

$F$ is upper semicontinuous if it is upper semicontinuous at each point of $X$. Following Christensen [Ch] we say, that a map $F$ is usco if it is upper semicontinuous and takes nonempty compact values. A map $F$ from a topological space $X$ to a linear topological space $Y$ is cusco if it is usco and $F(x)$ is convex for every $x \in X$.

Finally, a map $F$ from a topological space $X$ to a topological (linear topological space) $Y$ is said to be minimal usco (minimal cusco) if it is a minimal element in the family of all usco (cusco) maps (with domain $X$ and range $Y$); that is, if it is usco (cusco) and does not contain properly any other usco (cusco) map from $X$ into $Y$. By an easy application of the Kuratowski-Zorn principle we can guarantee that every usco (cusco)  map from $X$ to $Y$ contains a minimal usco (cusco) map from $X$ to $Y$ (see [BZ1], [BZ2], [DL]).

Other approach to minimality of set-valued maps can be found in [Ma] and [KKM].

In the paper [HH] we can find an interesting characterization of minimal usco maps using quasicontinuous and subcontinuous selections.

A function $f: X \to Y$ is quasicontinuous at $x \in X$ [Ne] if for every neighborhood $V$ of $f(x)$ and every $U \in \mathcal U(x)$ there is a nonempty open set $G \subset U$ such that $f(G) \subset V$. If $f$ is quasicontinuous at every point of $X$, we say that $f$ is quasicontinuous.

The notion of quasicontinuity was perhaps the first time used by R. Baire in [Ba] in the study of points of separately continuous functions. As Baire indicated in his paper [Ba] the condition of quasicontinuity has been suggested by Vito Volterra. There is a rich literature concerning the study of quasicontinuity, see for example [Ba], [Bo], [HP], [Ke], [KKM], [Ne]. A condition under which the pointwise limit of a sequence of quasicontinuous functions is quasicontinuous was studied in [HHo].

A function $f: X  \to Y$ is subcontinuous at $x \in X$ [Fu] if for every net $(x_i)$ convergent to $x$, there is a convergent subnet of $(f(x_i))$. If $f$ is subcontinuous at every $x \in X$, we say that $f$ is subcontinuous.

Let $F:X \to Y$ be a set-valued map. Then a function $f:X \to Y$ is called a selection of $F$ if $f(x) \in F(x)$ for every $x \in X$.

It is well known that every selection of a usco map is subcontinuous ([HH], [HN]).

\begin{theorem} (see Theorem 2.5 in [HH])  Let $X, Y$ be topological spaces and $Y$ be a $T_1$ regular space. Let $F$ be a map from $X$ to $Y$. The following are equivalent:

(1) $F$ is a minimal usco map;

(2) There exist a quasicontinuous and subcontinuous selection $f$ of $F$ such that $\overline f = F$;

(3) Every selection $f$ of $F$ is quasicontinuous, subcontinuous and $\overline f = F$.
\end{theorem}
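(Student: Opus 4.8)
The plan is to prove the cycle $(1)\Rightarrow(3)\Rightarrow(2)\Rightarrow(1)$, which reduces everything to two facts that I would isolate as lemmas. Fact~A: if $F$ is minimal usco and $f$ is \emph{any} selection of $F$, then $f$ is subcontinuous, $f$ is quasicontinuous, and $\overline f=F$. Fact~B: if $f\colon X\to Y$ is subcontinuous and quasicontinuous, then $\overline f$ (the closure of the graph of $f$, read as a set-valued map) is a minimal usco map. Granting these, $(1)\Rightarrow(3)$ is exactly Fact~A; $(3)\Rightarrow(2)$ is immediate, since one just picks a selection $f$ of $F$ and $(3)$ says it already witnesses $(2)$; and $(2)\Rightarrow(1)$ is Fact~B, because $(2)$ hands us a subcontinuous quasicontinuous $f$ with $F=\overline f$.

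For Fact~A I would argue as follows. A usco map into a Hausdorff space has closed graph, so $\overline f\subseteq F$ as subsets of $X\times Y$; hence $\overline f(x)$ is a closed subset of the compact set $F(x)$, and it is nonempty since $f(x)\in\overline f(x)$. Subcontinuity of $f$ is the quoted fact that every selection of a usco map is subcontinuous. Upper semicontinuity of $\overline f$ is a routine net argument: if it failed at some $x$ one would get a net $x_i\to x$ and $y_i\in\overline f(x_i)$ with $y_i\notin N$ for an open $N\supseteq\overline f(x)$; since $y_i\in F(x_i)$ and $F$ is usco, some subnet of $(y_i)$ converges to a point $y\in F(x)$, and since $\overline f$ has closed graph $y\in\overline f(x)\subseteq N$, a contradiction. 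So $\overline f$ is usco, and being contained in the minimal usco map $F$ it equals $F$. The one genuinely delicate point is quasicontinuity of $f$. Suppose it fails at $x_0$: there are an open $V\ni f(x_0)$ and $U_0\in\mathcal U(x_0)$ such that $f(O)\not\subseteq V$ for every nonempty open $O\subseteq U_0$; equivalently $D_0:=\{x\in U_0: f(x)\notin V\}$ is dense in $U_0$. Set $D=(X\setminus U_0)\cup D_0$, so $\overline D=X$, and let $G=\overline{f\restriction D}$. Using $\overline D=X$, subcontinuity of $f$, and the usco property of $F$ one checks (exactly as in the paragraph above) that $G$ is usco; plainly $G\subseteq\overline f=F$; and $(x_0,f(x_0))\notin G$, because a net $x_i\in D$ with $x_i\to x_0$ and $f(x_i)\to f(x_0)$ would eventually satisfy $x_i\in U_0$ and $f(x_i)\in V$, contradicting $x_i\in D_0$. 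Thus $G$ is a proper usco submap of $F$, contradicting minimality, so $f$ is quasicontinuous.

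For Fact~B I must first show $\overline f$ is usco and then that it is minimal. Closedness of the graph of $\overline f$ is automatic, and $\overline f(x)\ni f(x)$ is nonempty; compactness of $\overline f(x)=\bigcap_{U\in\mathcal U(x)}\overline{f(U)}$ and upper semicontinuity of $\overline f$ then follow from subcontinuity of $f$ by standard cluster-set arguments, here using regularity of $Y$ to fatten neighborhoods of the compact set $\overline f(x)$. For minimality, suppose $G\subseteq\overline f$ is usco with $G\neq\overline f$. Since $G$ has closed graph, if $f(x)\in G(x)$ for all $x$ then $\overline f\subseteq G$, so $G=\overline f$; hence there is $x_0$ with $f(x_0)\notin G(x_0)$. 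As $G(x_0)$ is compact and $Y$ is $T_1$ regular, pick an open $V\ni f(x_0)$ with $\overline V\cap G(x_0)=\emptyset$, and use upper semicontinuity of $G$ to get $U\in\mathcal U(x_0)$ with $G(U)\subseteq Y\setminus\overline V$. By regularity choose an open $V'$ with $f(x_0)\in V'\subseteq\overline{V'}\subseteq V$, and apply quasicontinuity of $f$ at $x_0$ (to the neighborhoods $V'$ of $f(x_0)$ and $U$ of $x_0$) to obtain a nonempty open $W\subseteq U$ with $f(W)\subseteq V'$. Then $\overline f(x)\subseteq\overline{f(W)}\subseteq\overline{V'}\subseteq V$ for every $x\in W$, while $\emptyset\neq G(x)\subseteq G(U)\subseteq Y\setminus\overline V$; since $G(x)\subseteq\overline f(x)$ this forces $G(x)\subseteq V\cap(Y\setminus\overline V)=\emptyset$, a contradiction. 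Hence $\overline f$ has no proper usco submap and is minimal usco.

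I expect the main obstacle to be the quasicontinuity half of Fact~A — building the candidate proper submap $G=\overline{f\restriction D}$ and verifying that it really is usco and really is a proper subset of $F$ — together with the symmetric bookkeeping in Fact~B that converts the hypothetical submap $G$ into a clean contradiction via quasicontinuity. The upper-semicontinuity and compactness verifications are routine, but they have to be carried out with care, because a compact-valued map with closed graph need not be usco without the additional input of subcontinuity.
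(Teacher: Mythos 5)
Your proof is correct. Note first that the paper itself gives no argument for this statement: it is imported verbatim as Theorem 2.5 of [HH], so there is no internal proof to match yours against; what follows compares your argument with the machinery the paper does develop elsewhere. Your decomposition into Fact~A ($(1)\Rightarrow(3)$) and Fact~B ($(2)\Rightarrow(1)$) is the natural one, and both halves check out. The key step in Fact~A --- negating quasicontinuity at $x_0$ to get a set $D_0$ dense in $U_0$ with $f(D_0)\cap V=\emptyset$, then forming the dense set $D=(X\setminus U_0)\cup D_0$ and the usco map $G=\overline{f\upharpoonright D}\subset F$ that omits $(x_0,f(x_0))$ --- is exactly the ``closure of a densely defined restriction'' device the paper exploits later (Theorem 2.4 and Remarks 2.2, 2.3), so your route is very much in the spirit of the paper even though it reconstructs a cited result. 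In Fact~B, the assertion that subcontinuity of $f$ forces $\overline f(x)=\bigcap_{U\in\mathcal U(x)}\overline{f(U)}$ to be compact and $\overline f$ to be upper semicontinuous is the one place where you defer to a ``standard cluster-set argument''; this is legitimate (it is the lemma the paper attributes to [HN], and the paper reproves it in the densely defined setting inside the proof of Theorem 2.4, using the covering characterization (*) of subcontinuity together with regularity of $Y$), but a fully self-contained write-up should reproduce that covering argument. Two small caveats: $(3)\Rightarrow(2)$ silently requires $F$ to admit a selection, i.e.\ to have nonempty values --- a degenerate-case gap that sits in the statement of the theorem rather than in your proof; and in several places you use that a usco map into a Hausdorff space has closed graph, which is fine here because $T_1$ together with regularity implies Hausdorff, but that implication deserves an explicit word.
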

\bigskip

Let $Y$ be a linear topological space and $B \subset Y$ is a set. By $\overline{co} B$ we denote the closed convex hull of the set $B$ (see [AB]).
\bigskip

The following Lemma is a folklore.

\begin{lemma} Let $X$ be a topological space and $Y$ be a Hausdorff locally convex linear topological space. Let $G$ be a usco map from $X$ to $Y$ and  $\overline{co}$$ G(x)$ is compact for every $x \in X$. Then the map $F$ defined as $F(x) = \overline{co}$$ G(x)$ for every $x \in X$ is a cusco map.
\end{lemma}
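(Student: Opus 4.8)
The plan is to check, in turn, the three defining properties of a cusco map for $F$: nonempty compact values, convex values, and upper semicontinuity. Two of these are immediate. Since $G(x)\subseteq F(x)$ and $G(x)\neq\emptyset$, also $F(x)\neq\emptyset$; compactness of $F(x)=\overline{co}\,G(x)$ is exactly the standing hypothesis; and the closed convex hull of any subset of a linear topological space is convex. So the only real content of the lemma is the upper semicontinuity of $F$, and this is the single place where local convexity of $Y$ enters.

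For upper semicontinuity I would fix $x\in X$ and an open set $V\supseteq F(x)$ and produce $U\in\mathcal U(x)$ with $F(U)\subseteq V$. The key step is a standard ``fattening'' of the compact convex set $F(x)$ inside $V$: using continuity of addition together with the existence of a base of convex open neighborhoods of $0$ in $Y$, one covers $F(x)$ by finitely many translates $y_i+O_{y_i}$ with $y_i+O_{y_i}+O_{y_i}+O_{y_i}\subseteq V$, and sets $O=\bigcap_i O_{y_i}$, a convex open neighborhood of $0$ with $F(x)+O+O\subseteq V$. Then $W:=F(x)+O$ is open (a union of translates of $O$), convex (a sum of two convex sets), contains $F(x)$, and satisfies $\overline W\subseteq W+O=F(x)+O+O\subseteq V$, where $\overline W\subseteq W+O$ holds for every neighborhood $O$ of $0$. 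Thus $F(x)\subseteq W\subseteq\overline W\subseteq V$ with $W$ open and convex. I expect this topological--vector--space fact to be the only nonformal point; everything else is bookkeeping.

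Once $W$ is in hand the argument closes quickly. Since $G(x)\subseteq F(x)\subseteq W$ and $G$ is upper semicontinuous at $x$, there is $U\in\mathcal U(x)$ with $G(U)\subseteq W$. For each $u\in U$ we then have $G(u)\subseteq W$; as $W$ is convex this gives $co\,G(u)\subseteq W$, and hence
\[
F(u)=\overline{co}\,G(u)=\overline{co\,G(u)}\subseteq\overline W\subseteq V .
\]
Therefore $F(U)\subseteq V$, which shows that $F$ is upper semicontinuous at $x$; since $x$ was arbitrary, $F$ is usco, and being convex-valued it is a cusco map, as desired.
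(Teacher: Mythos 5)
Your proof is correct. The paper states this lemma as folklore and offers no proof at all, so there is nothing to compare against; your argument is a complete and careful rendering of the standard one. You correctly isolate the only nontrivial point, namely upper semicontinuity of $F$, and the fattening step is sound: covering the compact set $F(x)$ by finitely many translates $y_i+O_{y_i}$ with $y_i+O_{y_i}+O_{y_i}+O_{y_i}\subseteq V$ yields a convex open neighborhood $O$ of $0$ with $F(x)+O+O\subseteq V$, and the inclusion $\overline W\subseteq W+O$ for $W=F(x)+O$ is the standard fact $\overline A\subseteq A+O$ for any neighborhood $O$ of $0$. The final step, passing from $G(U)\subseteq W$ to $\overline{co}\,G(u)\subseteq\overline W\subseteq V$ via convexity of $W$, is exactly where local convexity pays off, and you use the identity $\overline{co}\,B=\overline{co(B)}$ correctly.
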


\bigskip
\begin{remark} There are three important cases when the closed convex hull of a compact set is compact. The first is when the compact set is a finite union of compact convex sets. The second is when the space is completely metrizable and locally convex. This includes the case of all Banach spaces with their norm topologies. The third case is a compact set in the weak topology on a Banach space. (see[AB])
\end{remark}

A set-valued map $F$ from a topological space $X$ to a linear topological space $Y$ is hyperplane minimal [BZ1] if for every open half-space $W$ in $Y$ and open set $U$ in $X$ with $F(U) \cap W \ne \emptyset$ there is a nonempty open subset $V \subset U$ such that $F(V) \subset W$. It is known [BZ1] that a cusco map from a topological space $X$ into Hausdorff locally convex linear topological space $Y$ is minimal cusco if, and only if, it is hyperplane minimal.

If $f: X \to Y$ is a quasicontinuous function from a topological space to a linear topological space then $f$ is hyperplane minimal. The following example is an example of a hyperplane minimal function which is not quasicontinuous.

\begin{example} Let $X = Y =\Bbb R$ with the usual topology. Define $f: X \to Y$ as follows: $f(x) = -1$ if $x < 0$, $f(0) = 0$ and $f(x) = 1$ if $x > 0$.

\end{example}

\bigskip

Notice that all known characterizations of minimal cusco maps are given in the class of cusco maps (see [GM], [BZ1]). So the following characterization of minimal cusco maps in the class of all set-valued maps can be of some interest:

\begin{theorem} Let $X$ be a topological space and $Y$ be a Hausdorff locally convex (linear topological) space. Let $F$ be a map from $X$ to $Y$. Then the following are equivalent:

(1) $F$ is a minimal cusco map;

(2) $F$ has nonempty compact values and there is a quasicontinuous, subcontinuous selection $f$ of $F$ such that $\overline {co}$$ \overline f(x) = F(x)$ for every $x \in X$;

(3) $F$ has nonempty compact  values and there is a hyperplane minimal, subcontinuous selection $f$ of $F$ such that $\overline{co}$$\overline f(x) = F(x)$ for every $x \in X$;

(4) $F$ has nonempty compact values and every selection $f$ of $F$ is hyperplane minimal, subcontinuous and $\overline{co}$$ \overline f(x) = F(x)$ for every $x \in X$.

\end{theorem}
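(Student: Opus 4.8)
The plan is to establish the cycle $(1)\Rightarrow(2)\Rightarrow(3)\Rightarrow(1)$ together with $(1)\Rightarrow(4)\Rightarrow(3)$; since $(2)\Rightarrow(3)$ is immediate (a quasicontinuous function into a linear topological space is hyperplane minimal, as recalled in the excerpt) and $(4)\Rightarrow(3)$ is immediate (a nonempty-valued map admits a selection), this yields all four equivalences. The recurring tools will be Lemma 2.2, Theorem 2.1, the equivalence ``cusco $+$ hyperplane minimal $\iff$ minimal cusco'' for maps into a Hausdorff locally convex space, and the standard fact (see [HH]) that the closure $\overline f$ of the graph of a subcontinuous function is a usco map.

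For $(1)\Rightarrow(2)$ I would take, via the Kuratowski--Zorn argument, a minimal usco map $G\subset F$, pick any selection $f$ of $G$, and invoke Theorem 2.1 to get that $f$ is quasicontinuous and subcontinuous with $\overline f=G$. Then $f$ is a selection of $F$; moreover $\overline{co}\,\overline f(x)=\overline{co}\,G(x)$ is a closed subset of the compact convex set $F(x)$, hence compact, so by Lemma 2.2 the map $x\mapsto\overline{co}\,G(x)$ is cusco and is contained in $F$; minimality of $F$ forces $\overline{co}\,\overline f(x)=F(x)$ for all $x$. For $(1)\Rightarrow(4)$: $F$ has nonempty compact values; any selection $f$ of $F$ is subcontinuous because $F$ is usco, whence $\overline f$ is usco and $\overline f\subset\overline F=F$, so exactly as above $\overline{co}\,\overline f(x)=F(x)$; and $f$ is hyperplane minimal because $F$ is --- given an open half-space $W$ and open $U$ with $f(U)\cap W\neq\emptyset$ one has $F(U)\cap W\neq\emptyset$, so some nonempty open $V\subset U$ satisfies $F(V)\subset W$, hence $f(V)\subset W$.

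The heart of the proof is $(3)\Rightarrow(1)$. Here $\overline f$ is usco (as $f$ is subcontinuous), its values $\overline{co}\,\overline f(x)=F(x)$ are compact by hypothesis, so Lemma 2.2 gives that $F$ is cusco, and it remains to show $F$ is hyperplane minimal. Fix an open half-space $W=\{y:\ell(y)<c\}$, $\ell$ continuous linear, and an open $U\subset X$ with $F(U)\cap W\neq\emptyset$. The elementary observation that $\overline{co}\,A\cap W\neq\emptyset$ forces $A\cap W\neq\emptyset$ for an open half-space $W$ (otherwise $A$, hence $\overline{co}\,A$, lies in the closed half-space $\{\ell\geq c\}$) gives a point $u\in U$ and $z\in\overline f(u)\cap W$; choosing a net $x_i\to u$ with $f(x_i)\to z$, eventually $x_i\in U$ and $f(x_i)\in W$, so $f(U)\cap W\neq\emptyset$ and hyperplane minimality of $f$ produces a nonempty open $V\subset U$ with $f(V)\subset W$. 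Now I would pass to a strictly smaller half-space: pick $v_0\in V$ and $c'$ with $\ell(f(v_0))<c'<c$; then $f(V)\cap\{\ell<c'\}\neq\emptyset$, so hyperplane minimality of $f$ again gives a nonempty open $V'\subset V$ with $f(V')\subset\{\ell<c'\}$. For $v\in V'$ one has $\overline f(v)\subset\overline{f(V')}\subset\{\ell\leq c'\}$, and since $\{\ell\leq c'\}$ is closed and convex, $F(v)=\overline{co}\,\overline f(v)\subset\{\ell\leq c'\}\subset W$; thus $F(V')\subset W$, so $F$ is hyperplane minimal and therefore minimal cusco.

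The step I expect to be the main obstacle is precisely this passage from ``$f(V)\subset W$'' to ``$F(V')\subset W$'': hyperplane minimality of $f$ only controls the values of $f$ on an open set, whereas $F$ is assembled from the closures $\overline f(v)$ and from closed convex hulls, so one cannot use $W$ directly and must shrink to $\{\ell<c'\}$, exploiting that a strict inequality is available at the single point $f(v_0)$, together with $\overline f(v)\subset\overline{f(V')}$ and the closedness and convexity of closed half-spaces. The only other nontrivial ingredient is the cited fact that the closure of the graph of a subcontinuous function is usco, which is what makes Lemma 2.2 applicable in $(1)\Rightarrow(4)$ and $(3)\Rightarrow(1)$.
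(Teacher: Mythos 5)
Your proof is correct, but the key implication $(3)\Rightarrow(1)$ is argued along a genuinely different route from the paper's. The paper proves minimality by contradiction: it takes a minimal cusco $L\subset F$ (via Kuratowski--Zorn), picks a point $y_0\in\overline f(x_0)\setminus L(x_0)$, strongly separates it from the compact convex set $L(x_0)$ by a continuous linear functional, and then uses upper semicontinuity of $L$ together with hyperplane minimality of $f$ to force $\overline{co}\,\overline f(x)\cap L(x)=\emptyset$ on a nonempty open set --- an absurdity since $L(x)\subset F(x)=\overline{co}\,\overline f(x)$. You instead verify directly that $F$ itself is hyperplane minimal and invoke the [BZ1] equivalence (quoted in the paper) that a cusco map into a Hausdorff locally convex space is minimal cusco iff it is hyperplane minimal; your shrinking of the half-space $\{\ell<c\}$ to $\{\ell<c'\}$ with $c'<c$ is exactly the right device to pass from control of $f$ on an open set to control of $\overline{co}\,\overline f$, and the preliminary observation that $\overline{co}\,A$ meets an open half-space only if $A$ does is correct. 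Your route avoids Zorn's lemma and the explicit separation step but leans on the [BZ1] characterization as a black box (whose own proof hides comparable separation arguments); the paper's route is more self-contained in that respect. Similarly, in $(1)\Rightarrow(2)$ you replace the citation of Proposition 2.7 of [BZ1] by a direct minimality argument ($\overline{co}\,G(x)$ is a closed, hence compact, subset of $F(x)$, so the folklore lemma --- the paper's Lemma 2.1, which you call Lemma 2.2 --- gives a cusco map squeezed inside the minimal $F$), and in $(1)\Rightarrow(4)$ you derive hyperplane minimality of a selection from that of $F$ by the obvious two-line argument rather than citing Theorem 2.6 of [BZ1]. All of these substitutions are sound.
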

\begin{proof} $(1) \Rightarrow (2)$ Let $G \subset F$ be a minimal usco map contained in $F$. Let $f$ be a selection of $G$. By Theorem 2.1 $f$ is a quasicontinuous and subcontinuous selection of $G$ such that $\overline f = G$. So $f$ is also a selection  of $F$. By Proposition 2.7 in [BZ1] we have  $\overline{co}$$ \overline f(x) = F(x)$ for every $x \in X$.

$(2) \Rightarrow (3)$ is trivial, since every quasicontinuous function from $X$ to $Y$ is hyperplane minimal.

$(3) \Rightarrow (1)$ Let $f$ be a hyperplane minimal, subcontinuous selection of $F$. Since $f$ is subcontinuous,   $\overline f$ is usco by [HN]. Since $\overline{co}$$\overline f(x) = F(x)$ for every $x \in X$ and $F(x)$ is compact for every $x \in X$, $F$ is cusco by Lemma 2.1.
Thus it is sufficient to show that $F$ is minimal. Suppose, by way of contradiction, that $F$ is not minimal. Thus there is a minimal cusco map $L \subset F$ such that there is a point $(x_0,y_0) \in F \setminus L$. Since $L(x_0)$ is a convex set and $\overline{co}$$\overline f(x_0) = F(x_0)$, without loss of generality we can suppose that $y_0 \in \overline f(x_0) \setminus L(x_0)$. Since $L(x_0)$ is a closed convex set and $y_0 \notin L(x_0)$, there is a nonzero continuous linear functional strongly separating $L(x_0)$ and $y_0$. So let $h: Y \to R$ be a continuous linear functional and $\lambda \in R$ such that

\bigskip
\centerline{$L(x_0) \subset \{y \in Y: h(y) < \lambda\}$ and $h(y_0) > \lambda$.}

\bigskip
Since the map $L$ is upper semicontinuous there is $U \in \mathcal U(x_0)$ such that $L(U) \subset \{y \in Y: h(y) < \lambda\}$ and since $y_0 \in \overline f(x_0)$ and $f$ is hyperplane minimal, there is a nonempty open set $V \subset U$ such that $f(V) \subset \{y \in Y: h(y) > \lambda\}$. Thus $\overline f(V) \subset \{y \in Y: h(y) \ge \lambda\}$. For every $x \in V$ we have $\overline{co}$$\overline f(x) \cap L(x) = \emptyset$, a contradiction.

Since $(4) \Rightarrow (3)$ is trivial, it is sufficient to prove that $(1) \Rightarrow (4)$. Let $f$ be a selection of $F$. Since every selection of a usco map is subcontinuous,  $f$ must be subcontinuous.  $\overline f$ is usco and $\overline f \subset F$ implies that $\overline{co}$$\overline f(x)$ is compact for every $x \in X$. By Lemma 2.1 the map $G$ defined as $G(x) =\overline{co}$$\overline f(x)$ for every $x \in X$ is cusco. Since $G \subset F$ and $F$ is minimal, we have $\overline{co}$$\overline f(x) = F(x)$ for every $x \in X$. It is easy to verify from Theorem 2.6 in [BZ1] that $f$ is hyperplane minimal.

\end{proof}
\bigskip
We have the following variant of Theorem 2.2:

\begin{theorem} Let $X$ be a topological space and $Y$ be a Hausdorff locally convex (linear topological) space in which the closed convex hull of a compact set is compact. Let $F: X \to Y$ be a set-valued map. The following are equivalent:

(1) $F$ is minimal cusco map;

(2) There is a quasicontinuous subcontinuous function $f: X \to Y$ such that $\overline{co}$$\overline f(x) = F(x)$ for every $x \in X$;

(3) There is a hyperplane minimal subcontinuous function $f: X \to Y$ such that $\overline{co}$$ \overline f(x) = F(x)$ for every $x \in X$;

(4) Every selection $f$ of $F$ is hyperplane minimal and subcontinuos and $\overline{co}$$ \overline f(x) = F(x)$ for every $x \in X$.

\end{theorem}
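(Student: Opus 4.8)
The plan is to leverage Theorem 2.2 together with the hypothesis that closed convex hulls of compact sets are compact in $Y$, so that the ad hoc compactness assumptions ("$F$ has nonempty compact values") appearing in Theorem 2.2 become automatic. The key observation is that under this extra hypothesis on $Y$, a function $f : X \to Y$ is subcontinuous if and only if $\overline f$ is usco, and in that case $\overline{co}\,\overline f(x)$ is automatically compact for every $x$; hence defining $F(x) = \overline{co}\,\overline f(x)$ always produces a map with nonempty compact convex values. This means statements (2), (3), (4) here are really just statements (2), (3), (4) of Theorem 2.2 with the redundant hypotheses dropped.

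First, for $(1) \Rightarrow (2)$: if $F$ is minimal cusco, then in particular $F$ has nonempty compact values, so by Theorem 2.2 $(1) \Rightarrow (2)$ there is a quasicontinuous subcontinuous selection $f$ of $F$ with $\overline{co}\,\overline f(x) = F(x)$ for every $x \in X$; this $f$ is the desired function (we simply forget that it was a selection). Next, $(2) \Rightarrow (3)$ is immediate since every quasicontinuous function into a linear topological space is hyperplane minimal, as noted in the excerpt. For $(3) \Rightarrow (1)$: given a hyperplane minimal subcontinuous $f : X \to Y$ with $\overline{co}\,\overline f(x) = F(x)$, first note $\overline f$ is usco by [HN] (subcontinuity), so each $\overline f(x)$ is compact; by the standing hypothesis on $Y$, $\overline{co}\,\overline f(x) = F(x)$ is then compact, so $F$ has nonempty compact values. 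Now $f$ is a selection of $\overline f \subset F$, hence a selection of $F$, so $F$ satisfies hypothesis (3) of Theorem 2.2, and therefore $F$ is minimal cusco by Theorem 2.2 $(3) \Rightarrow (1)$.

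Finally, $(4) \Rightarrow (3)$: since $F$ is a set-valued map, pick any selection $f$ of $F$ (this uses nothing but the axiom of choice, as each $F(x)$ is implicitly nonempty — note that $(4)$ already asserts $\overline{co}\,\overline f(x) = F(x)$ for selections $f$, so $F(x) \ne \emptyset$); by hypothesis this $f$ is hyperplane minimal, subcontinuous, and satisfies $\overline{co}\,\overline f(x) = F(x)$, which is exactly (3). For $(1) \Rightarrow (4)$: if $F$ is minimal cusco then $F$ has nonempty compact values, and by Theorem 2.2 $(1) \Rightarrow (4)$ every selection $f$ of $F$ is hyperplane minimal, subcontinuous, with $\overline{co}\,\overline f(x) = F(x)$ for every $x$, which is precisely (4). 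The main (minor) obstacle is bookkeeping: one must be careful that in (2) and (3) the function $f$ is \emph{not} assumed a priori to be a selection of $F$, so the reduction to Theorem 2.2 requires first establishing, via subcontinuity and the hypothesis on $Y$, that $F(x) = \overline{co}\,\overline f(x)$ does have nonempty compact convex values and that $f$ lands in $F$; once this is in place, Theorem 2.2 applies verbatim.
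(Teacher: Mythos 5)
Your reduction to Theorem 2.2 is correct and is exactly the intended argument: the paper states this theorem without proof as a `variant' of Theorem 2.2, and you correctly isolate the one real point, namely that in (2) and (3) the function $f$ is not a priori a selection of a map with compact values, so the compactness of $F(x)=\overline{co}\,\overline f(x)$ and the inclusion $f(x)\in F(x)$ must be recovered from the subcontinuity of $f$ (which makes $\overline f$ usco) together with the standing hypothesis that closed convex hulls of compact sets in $Y$ are compact. One pedantic caveat: your justification in $(4)\Rightarrow(3)$ that $F$ has nonempty values is circular (if some $F(x)$ were empty there would be no selections and (4) would hold vacuously), but this is an imprecision in the statement of (4) itself, which must be read as tacitly assuming $F$ has nonempty values as in Theorem 2.2(4).
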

\bigskip
Notice that Theorem 2.3 gives us a rule how to construct minimal cusco maps with values in Hausdorff locally convex (linear topological) spaces in which the closed convex hull of a compact set is compact.

\bigskip
It is interesting to note that our Theorem 2.2 (and also Theorem 2.3) implies the well-known result that every convex function on an open convex subset of a finite dimensional normed linear space is Frechet differentiable on a dense $G_{\delta}$ subset of its domain. Let $f$ be a convex function defined on an open convex subset $A$ of a finite dimensional normed linear space $X$. It is known that the subdifferential mapping $x\rightarrow \partial f(x)$ is a minimal cusco map from $A$ into $X$ [Ph]. Further $f$ is Frechet differentiable at $x \in A$ if and only if the subdifferential mapping $x\rightarrow \partial f(x)$ is single-valued. By Theorem 2.2 (2) there is a quasicontinuous selection $h$ of the subdifferential mapping such that $co \overline h(x)$ = $\partial f(x)$. It is easy to verify that if $x$ is a point of continuity of $h$, then $co \overline h(x) = \{h(x)\}$. It is well known (see for example [HP], [Ne]) that the set of points of continuity of a quasicontinuous function defined on a Baire space with values in a metrizable space is a dense $G_\delta$ set.

\bigskip

In the last part of this section we will extend Theorem 2.18 in [BZ1].
\bigskip

Notice that the notion of subcontinuity can be extend for so-called densely defined functions.

Let $A$ be a dense subset of a topological space $X$ and $Y$ be a topological space. Let $f: A \to Y$  be a function. We say that $f$ is densely defined. Further we say that $f: A \to Y$ is subcontinuous at $x \in X$ [LL] if for every net $(x_i) \subset A$, there is a convergent subnet of $(f(x_i))$. It is easy to verify that (*) $f: A \to Y$ is subcontinuous at $x \in X$ if and only if for every open cover $\mathcal H$ of $Y$ there is a finite subset $\mathcal F$ of $\mathcal H$ and $U \in \mathcal U(x)$ such that $f(U \cap A) \subset \cup \mathcal F$ (a slight modification of Theorem 2.1 in [No]).

We say that $f:A \to Y$ is subcontinuous if it is subcontinuous at every $x \in X$.

\bigskip
First we extend Theorem 2.1 using densely defined selections. Let $X, Y$ be topological spaces and $F: X \to Y$ be a map. We say that a densely defined function $f$ is a densely defined quasicontinuous selection of a set-valued map $F$, if $f(x) \in F(x)$ for every $x \in dom f$, the domain of $f$ and $f: dom f \to Y$ is quasicontinuous with respect to the induced topology on $dom f$.

\begin{theorem} Let $X, Y$ be topological spaces and $Y$ be a $T_1$ regular space. Let $F: X \to Y$ be a map. The following are equivalent:

(1) $F$ is minimal usco;

(2) There is a densely defined quasicontinuous subcontinuous selection $f$ of $F$ such that $\overline f = F$.
\end{theorem}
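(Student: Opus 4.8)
\emph{Proof proposal.} The plan is to prove the two implications separately, using Theorem 2.1 for the forward direction and an adaptation of the argument of Theorem 2.2 for the reverse one. For $(1)\Rightarrow(2)$ there should be essentially no work: if $F$ is minimal usco, then by Theorem 2.1 every selection $f\colon X\to Y$ of $F$ is quasicontinuous, subcontinuous and satisfies $\overline f=F$, and such an $f$ is in particular a densely defined (indeed everywhere defined) quasicontinuous subcontinuous selection of $F$, so $(2)$ holds.

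For $(2)\Rightarrow(1)$ let $f\colon A\to Y$ be the given selection, with $A$ dense in $X$ and $\overline f=F$. First I would record two routine facts, both valid because $A$ is dense and $F=\overline f$ is closed in $X\times Y$: for every open $U\subseteq X$ one has $F(U)\subseteq\overline{f(U\cap A)}$, and for every $x$ one has $F(x)=\bigcap\{\overline{f(U\cap A)}\colon U\in\mathcal U(x)\}$; in particular each $F(x)$ is closed, and it is nonempty because picking $a_U\in U\cap A$ for $U\in\mathcal U(x)$ gives a net converging to $x$ whose image has, by subcontinuity, a convergent subnet, the limit of which lies in $\overline f(x)=F(x)$. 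Next I would show that $F$ is usco. For compactness of $F(x)$: given an open cover $\{V_\alpha\}$ of the closed set $F(x)$, use the regularity of $Y$ to choose open sets $O_y$ with $y\in O_y\subseteq\overline{O_y}\subseteq V_{\alpha(y)}$ for $y\in F(x)$ and open sets $N_z\ni z$ with $\overline{N_z}\cap F(x)=\emptyset$ for $z\in Y\setminus F(x)$; applying $(*)$ to the open cover $\{O_y\}\cup\{N_z\}$ of $Y$ gives a finite subfamily and a $U\in\mathcal U(x)$ with $f(U\cap A)$ inside their union, and since $F(x)\subseteq\overline{f(U\cap A)}$ while the sets $\overline{N_z}$ avoid $F(x)$, finitely many of the $V_\alpha$ already cover $F(x)$. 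For upper semicontinuity at $x$: given open $V\supseteq F(x)$, the compactness of $F(x)$ together with regularity gives an open set $O$ with $F(x)\subseteq O\subseteq\overline O\subseteq V$, so it is enough to produce $U\in\mathcal U(x)$ with $f(U\cap A)\subseteq O$ (then $F(U)\subseteq\overline{f(U\cap A)}\subseteq\overline O\subseteq V$); were there no such $U$, choosing $a_U\in U\cap A$ with $f(a_U)\notin O$ for each $U\in\mathcal U(x)$ would give a net $(a_U)\to x$ whose image has, by subcontinuity, a subnet converging to a point of $\overline f(x)=F(x)\subseteq O$, which is impossible since $O$ is open and contains no $f(a_U)$.

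Once $F$ is known to be usco, minimality follows as in Theorem 2.2. Being usco, $F$ contains a minimal usco map $G$; suppose $G\ne F$ and pick $(x_0,y_0)\in F\setminus G$. As $G(x_0)$ is compact and $Y$ is regular and Hausdorff, choose open $W\ni y_0$ and open $W'\supseteq G(x_0)$ with $\overline W\cap W'=\emptyset$, and use the upper semicontinuity of $G$ to get $U\in\mathcal U(x_0)$ with $G(U)\subseteq W'$. Since $(x_0,y_0)\in\overline f$, there is $a\in U\cap A$ with $f(a)\in W$, and quasicontinuity of $f$ at $a$ for the subspace topology of $A$ then yields a nonempty open set $\Omega\subseteq U$ of $X$ with $f(\Omega\cap A)\subseteq W$. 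Hence $G(\Omega)\subseteq F(\Omega)\subseteq\overline{f(\Omega\cap A)}\subseteq\overline W$ while also $G(\Omega)\subseteq G(U)\subseteq W'$, so $G(\Omega)\subseteq\overline W\cap W'=\emptyset$, contradicting that the usco map $G$ has nonempty values on the nonempty open set $\Omega$. Therefore $F=G$ is minimal usco.

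The main obstacle is the usco part of $(2)\Rightarrow(1)$: the statement that $\overline f$ is usco, quoted from [HN] for functions defined on all of $X$, must be reproved for a densely defined subcontinuous $f$, and this seems to force one to go through the open-cover characterization $(*)$ of subcontinuity and to invoke the regularity of $Y$ twice, once for compactness of the values and once, a little less obviously, for upper semicontinuity. After that, the minimality step is a routine transcription of the argument already carried out for cusco maps in Theorem 2.2.
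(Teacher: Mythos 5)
Your proposal is correct. The first half of $(2)\Rightarrow(1)$ --- establishing that $F=\overline f$ is usco --- is essentially the paper's own argument: the same use of the open-cover reformulation $(*)$ of subcontinuity together with regularity of $Y$ for compactness of the values, and the same net-plus-cluster-point argument (again via regularity) for upper semicontinuity. Where you genuinely diverge is the minimality step. The paper reduces to Theorem 2.1(3): it takes an arbitrary selection $g$ of $\overline f$ and shows directly, using the quasicontinuity of the densely defined $f$, that $g$ is quasicontinuous, whence $\overline f$ is minimal usco. You instead invoke the Kuratowski--Zorn fact that the usco map $F$ contains a minimal usco $G$, assume $G\ne F$, and derive a contradiction by separating a point $(x_0,y_0)\in F\setminus G$ from $G(x_0)$ with disjoint open sets (legitimate, since $T_1$ regular makes $Y$ Hausdorff and $G(x_0)$ closed) and then pushing a nonempty open set $\Omega$ into $\overline W\cap W'=\emptyset$ via upper semicontinuity of $G$ and quasicontinuity of $f$ on $\mathrm{dom}\,f$; this is a topological transcription of the paper's $(3)\Rightarrow(1)$ argument in Theorem 2.2, with regular separation replacing hyperplane separation. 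Both routes are sound; the paper's reuses the already-proved selection characterization of minimal usco maps and needs no appeal to Zorn at this point, while yours avoids having to verify quasicontinuity of every selection of $\overline f$ at the cost of importing the existence of a minimal usco inside $F$. The key inclusion $F(\Omega)\subseteq\overline{f(\Omega\cap A)}$ that your contradiction rests on is correctly justified by the density of $A$ and $F=\overline f$.
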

\begin{proof} $(1) \Rightarrow (2)$ is clear from Theorem 2.1. $(2) \Rightarrow (1)$ Let $f$ be a densely defined quasicontinuous subcontinuous selection of $F$. Thus $dom f$, the domain of $f$ is a dense set in $X$. We show that the subcontinuity of $f$ implies that, $\overline f(x)$ is a nonempty compact set for every $x \in X$. Let $x \in X$. Of course $\overline f(x) \ne \emptyset$. Let $\mathcal H$ be an open cover of $\overline f(x)$. Let $\mathcal H'$ be a refinement of $\mathcal H$ such that for every $H' \in \mathcal H'$ there is $H \in \mathcal H$ with $\overline{H'} \subset H$ and $\overline f(x) \subset \cup \mathcal H'$. For every $y \in Y \setminus \overline f(x)$ let $O_y$ be an open neighborhood of $y$ such that $\overline{O_y} \cap \overline f(x) = \emptyset$. Then the family $\mathcal H' \cup \{O_y: y \in Y \setminus  \overline f(x)\}$ is an open cover of $Y$. By (*) there is $U \in \mathcal U(x)$, $H_1', H_2', ... H_n' \in \mathcal H'$ and a finite indexed set $I$ such that $f(U \cap dom f) \subset \cup \{H_i': i = 1, 2, ... n\} \bigcup \cup \{O_{y_i}: i \in I\}$. Thus

\bigskip
\centerline{$\overline f(x) \subset \overline{f(U \cap dom f)} \subset (\overline{H_1'} \cup \overline{H_2'} \cup ... \cup \overline{H_n'}) \bigcup \cup\{\overline{O_{y_i}}: i \in I\}.$}

\bigskip

Thus $\overline f(x)  \subset H_1 \cup H_2 \cup ... \cup H_n,$ where $H_i \in \mathcal H$ for $i = 1, 2, ... n$.

Now we  will show that $\overline f$ is upper semicontinuous. Suppose there is $x \in X$ such that $\overline f$ is not upper semicontinuous at $x$. Let $V$ be an open set in $Y$ with $\overline f(x) \subset V$ such that for every $U \in \mathcal U(x)$ there are $x_U \in U$ and $y_U \in \overline f(x_U) \setminus V$. The regularity of $Y$ implies that there is an open set $G$ in $Y$ such that $\overline f (x) \subset G \subset \overline G \subset V$. Thus for every $U \in \mathcal U(x)$ we have $(x_U,y_U) \in \overline f \cap (U \times (Y \setminus \overline G))$. For every $U \in \mathcal U(x)$ there is $a_U \in dom f \cap U$ such that $f(a_U) \in Y \setminus \overline G$. Since the net $(a_U)_{U \in \mathcal U(x)}$ converges to $x$, the subcontinuity of $f$ at $x$ implies that there is a cluster point $y \in Y \setminus G$ of the net $(f(a_U))_{U \in \mathcal U(x)}$, a contradiction, since $y \in \overline f(x) \subset G$.

To prove that $\overline f$ is minimal usco, it is sufficient to show (by Theorem 2.1) that every selection $g: X \to Y$ of $\overline f$ is quasicontinuous, since every selection of $\overline f$ is subcontinuous (see Proposition 2.3 in [HH]). Let $g: X \to Y$ be a selection of $\overline f$. Let $x \in X$ and $U \in \mathcal U(x)$ and $V$ be an open neighborhood of $g(x)$. Let $G$ be an open neighborood of $g(x)$ such that $g(x) \in G \subset \overline G \subset V$. Since $(x,g(x)) \in \overline f$, there is $(z,f(z)) \in (U\cap dom f) \times G$. The quasicontinuity of $f$ at $z$ implies that there is a nonempty open set $H$ in $X$ such that $H \cap dom f \subset U \cap dom f$ and  $f(H \cap dom f) \subset G$. The set $H \cap U$ is a nonempty open set contained in $U$ and  $\overline f(H \cap U) \subset \overline G \subset V$. Thus $g(H \cap U) \subset V$.

\end{proof}

\bigskip

\begin{remark} Let $X$ be a Baire space and $F: X \to \Bbb R$ be usco. Let $f: X \to \Bbb R$ be a function defined as follows: $f(x) = inf\{t \in \Bbb R: t \in F(x)\}$ for $x \in X$. Then $f$ is a lower semicontinuous function. It is known (see [En]) that the set $C(f)$ of the points of continuity of $f$ is a dense $G_\delta$ set in $X$. Thus by Theorem 2.4 the map $G = \overline {f\upharpoonright C(f)}$ is a minimal usco map from $X$ to $ \Bbb R$ and $G \subset F$.

Also if $h: X \to \Bbb R$ is defined as $h(x) = sup\{t \in \Bbb R: t \in F(x)\}$ for $x \in X$, then $h$ is upper semicontinuous and by [En] the set $C(h)$ of the points of  continuity of $h$ is a dense $G_\delta$ set in $X$. Thus by Theorem 2.4 the map $H = \overline{h \upharpoonright C(h)}$ is a minimal usco map from $X$ to  $\Bbb R$ and $H \subset F$.

Proposition 5.1.24 in [BZ2] gives a construction of a minimal usco map contained in a given usco map from a general topological space with values in $\Bbb R$.

\end{remark}

\bigskip
We have the following extension of Theorem 2.18 in [BZ1]:
\begin{theorem} Let $X$ be a topological space and $Y$ be a Hausdorff locally convex (linear topological) space. Let $F: X \to Y$ be a map. The following are equivalent:

(1) $F$ is minimal cusco;

(2) $F$ has a nonempty compact values and there is a densely defined  quasicontinuous, subcontinuous  selection $f$ of $F$ such that $ \overline{co}$$\overline f(x) = F(x)$ for every $x \in X$;

(3) $F$ has a nonempty compact values and there is a densely defined hyperplane minimal, subcontinuous selection $f$ of $F$ such that $\overline{co}$$\overline f(x) = F(x)$ for every $x \in X$.

\end{theorem}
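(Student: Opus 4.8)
The plan is to establish the cycle $(1)\Rightarrow(2)\Rightarrow(3)\Rightarrow(1)$, reusing the machinery already built for Theorems 2.2 and 2.4. Throughout, \emph{densely defined hyperplane minimal} should be read in the natural way: a densely defined $f: dom f \to Y$ is densely defined hyperplane minimal if for every open half-space $W$ in $Y$ and every open $U \subset X$ with $f(U \cap dom f) \cap W \ne \emptyset$ there is a nonempty open $V \subset U$ with $f(V \cap dom f) \subset W$.

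For $(1)\Rightarrow(2)$ I would simply invoke Theorem 2.2: a minimal cusco $F$ has nonempty compact values and an everywhere defined quasicontinuous, subcontinuous selection $f$ with $\overline{co}\,\overline f(x)=F(x)$, and such an $f$ is in particular a densely defined selection with $dom f = X$. For $(2)\Rightarrow(3)$ it suffices to check that a densely defined quasicontinuous $f$ is densely defined hyperplane minimal: given $W$ and $U$ as above, pick $a \in U \cap dom f$ with $f(a) \in W$, apply quasicontinuity of $f$ at $a$ in the subspace $dom f$ to obtain a nonempty relatively open $G \subset U \cap dom f$ with $f(G) \subset W$, write $G = V \cap dom f$ with $V$ open in $X$, and replace $V$ by $V \cap U$.

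The substance is $(3)\Rightarrow(1)$. Let $f$ be a densely defined hyperplane minimal, subcontinuous selection of $F$ with $\overline{co}\,\overline f(x)=F(x)$ for all $x$. Since $Y$ is Hausdorff and locally convex it is $T_1$ and regular, so the first two paragraphs of the proof of Theorem 2.4 — which use only subcontinuity of the densely defined $f$ together with regularity of $Y$, not quasicontinuity — show that $\overline f$ has nonempty compact values and is upper semicontinuous, hence usco. As $\overline{co}\,\overline f(x)=F(x)$ is compact for every $x$, Lemma 2.1 gives that $F$ is cusco. To see $F$ is minimal, suppose not: there is a minimal cusco $L \subset F$ and $(x_0,y_0) \in F \setminus L$, and since $L(x_0)$ is closed convex and $\overline{co}\,\overline f(x_0)=F(x_0)$ we may assume $y_0 \in \overline f(x_0) \setminus L(x_0)$. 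Hahn--Banach gives a continuous linear functional $h: Y \to \Bbb R$ and $\lambda \in \Bbb R$ with $L(x_0) \subset \{h<\lambda\}$ and $h(y_0) > \lambda$; upper semicontinuity of $L$ yields $U \in \mathcal U(x_0)$ with $L(U) \subset \{h<\lambda\}$; and since $y_0 \in \overline f(x_0)$, hyperplane minimality of $f$ gives a nonempty open $V \subset U$ with $f(V \cap dom f) \subset \{h>\lambda\}$. For $x \in V$, any $(x,y) \in \overline f$ is a limit of a net $(a_i, f(a_i))$ with $a_i \in dom f$; as $V$ is open and $x \in V$, eventually $a_i \in V \cap dom f$, so $y \in \{h\ge\lambda\}$, whence $F(x)=\overline{co}\,\overline f(x) \subset \{h\ge\lambda\}$ and thus $F(x) \cap L(x) = \emptyset$, contradicting $\emptyset \ne L(x) \subset F(x)$.

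I expect the only real point requiring care to be the first step of $(3)\Rightarrow(1)$: one must confirm that the argument in the proof of Theorem 2.4 showing $\overline f$ is usco genuinely relies on nothing beyond subcontinuity of the densely defined selection and regularity of the range, so that it remains available when $f$ is merely hyperplane minimal rather than quasicontinuous. Everything else is a routine transcription of the proofs of Theorems 2.2 and 2.4 with "$dom f$" inserted in the appropriate places.
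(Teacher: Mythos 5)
Your proposal is correct and follows essentially the same route as the paper: $(1)\Rightarrow(2)$ via Theorem 2.2, $(2)\Rightarrow(3)$ by the (routine) observation that densely defined quasicontinuity implies densely defined hyperplane minimality, and $(3)\Rightarrow(1)$ by reusing the subcontinuity-only portion of the proof of Theorem 2.4 to get that $\overline f$ is usco, then Lemma 2.1 and the separation argument from $(3)\Rightarrow(1)$ of Theorem 2.2. Your explicit check that the usco argument in Theorem 2.4 needs only subcontinuity and regularity of $Y$, and your careful insertion of $dom f$ into the half-space argument, are exactly the points the paper leaves implicit.
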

\begin{proof} $(1) \Rightarrow (2)$ is clear from the  Theorem 2.2. $(2) \Rightarrow (3)$ is trivial.

$(3) \Rightarrow (1)$ Let $f$ be a densely defined hyperplane minimal, subcontinuous selection of $F$ such that $\overline{co}$$\overline f(x) = F(x)$ for every $x \in X$. As in the above proof we can show that $\overline f$ is usco. Since $F$ has compact values, the map $x\rightarrow \overline{co}$$\overline f(x)$ is cusco (by Lemma 2.1). To prove that $F$ is minimal cusco we can use the same argument as in the proof of $(3) \Rightarrow (1)$ of Theorem 2.2.
\end{proof}
\bigskip
\bigskip
\bigskip
To see that our Theorem 2.5 is an extension of Theorem 2.18 in [BZ1] we need the following comment:

Let $X$ be a topological space and $Y$ be a Hausdorff locally convex (linear topological) space. If $f$ is densely defined  subcontinuous function such that $\overline{co}$$\overline f(x)$ is compact for every $x \in X$, then  $CSC(f)(x) = \overline{co}$$\overline f(x)$ for every $x \in X$, where

\bigskip
\centerline{$CSC(f)(x) = \cap\{\overline{co}$$f(V): V \in \mathcal U(x)\}$ [BZ1].}
\bigskip

Notice that the authors in [BZ1] work in their Theorem 2.18 only with densely defined selections of cusco maps; i.e. with subcontinuous selections $f$ such that $\overline{co}$$\overline f(x)$ is compact for every $x \in X$.

However the condition of subcontinuity of $f$ is essential as the following example shows. (The inclusion $\overline{co}$$\overline f(x) \subset CSC(f)(x)$ can be proper.)

\bigskip
\begin{example} Let $X = \Bbb R = Y$ with the usual topology. Let $f: X \to Y$ be defined as follows: $f(x) = 0$ for every $x \le 0$ and $f(x) = 1/x$ for every $x > 0$. Then $\overline{co}$$\overline f(x) = \{f(x)\}$ for every $x \in X$ and $CSC(f)(0) = [0,\infty)$ and $CSC(f)(x) = \{f(x)\}$ otherwise. Of course $f$ is not subcontinuous at $0$.

\end{example}

\bigskip
\bigskip
\bigskip
\bigskip
\bigskip

We have the following variant of Theorem 2.5:

\begin{theorem} Let $X$ be a topological space and $Y$ be a Hausdorff locally convex (linear topological) space in which the closed convex hull of a compact set is compact. Let $F: X \to Y$ be a map. The following are equivalent:

(1) $F$ is minimal cusco;

(2) There is a densely defined quasicontinuous subcontinuous function $f$ with values in $Y$ such that $\overline{co}$$\overline f(x) = F(x)$ for every $x \in X$;

(3) There is a densely defined hyperplane minimal subcontinuous function $f$ with values in $Y$ such that $\overline{co}$$\overline f(x) = F(x)$ for every $x \in X$.

\end{theorem}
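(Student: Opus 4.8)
The plan is to reduce Theorem~2.7 to Theorem~2.5 by observing that the extra hypothesis on $Y$ --- that the closed convex hull of every compact set is compact --- automatically supplies the only ingredient of Theorem~2.5 that is not already present in the statement of Theorem~2.7, namely that $F$ has nonempty compact values. So the core of the argument is just bookkeeping: translate each clause of Theorem~2.7 into the corresponding clause of Theorem~2.5, and in the reverse direction check that the compactness hypothesis is harmless.

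First I would prove $(1)\Rightarrow(2)$. If $F$ is minimal cusco, then by Theorem~2.5 $(1)\Rightarrow(2)$ there is a densely defined quasicontinuous, subcontinuous selection $f$ of $F$ with $\overline{co}\,\overline f(x)=F(x)$ for every $x\in X$; in particular such an $f$ is a densely defined quasicontinuous subcontinuous function with values in $Y$ satisfying the required identity, which is exactly clause~(2) here. The implication $(2)\Rightarrow(3)$ is immediate, since every quasicontinuous function from $X$ to the linear topological space $Y$ is hyperplane minimal (as noted in the excerpt before Example~2.1).

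The substantive step is $(3)\Rightarrow(1)$. Let $f$ be a densely defined hyperplane minimal, subcontinuous function with values in $Y$ such that $\overline{co}\,\overline f(x)=F(x)$ for every $x\in X$. As in the proof of Theorem~2.4 the subcontinuity of $f$ forces $\overline f$ to be usco; in particular $\overline f(x)$ is a nonempty compact subset of $Y$ for every $x\in X$. By the hypothesis on $Y$, $\overline{co}\,\overline f(x)$ is then compact for every $x\in X$, so $F$ has nonempty compact values. Hence $f$ is in fact a densely defined hyperplane minimal, subcontinuous selection of $F$ (it takes values in $F(x)\supset\overline f(x)\ni f(x)$ on its domain), and $F$ has nonempty compact values, which is precisely clause~(3) of Theorem~2.5; therefore $F$ is minimal cusco by Theorem~2.5 $(3)\Rightarrow(1)$.

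I do not expect a genuine obstacle: the only thing to be careful about is the direction of the reduction. In Theorem~2.5 the compactness of the values of $F$ is an explicit hypothesis, whereas in Theorem~2.7 it must be deduced, and the natural place to deduce it is inside $(3)\Rightarrow(1)$, where the usco-ness of $\overline f$ is established anyway; from then on the hypothesis of Theorem~2.5 is met and one simply quotes it. For $(1)\Rightarrow(2)$ and $(2)\Rightarrow(3)$ nothing beyond what is already recorded is needed.

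\begin{proof}
$(1)\Rightarrow(2)$. If $F$ is minimal cusco, then by Theorem~2.5 $(1)\Rightarrow(2)$ there is a densely defined quasicontinuous, subcontinuous selection $f$ of $F$ with $\overline{co}\,\overline f(x)=F(x)$ for every $x\in X$. In particular $f$ is a densely defined quasicontinuous subcontinuous function with values in $Y$ satisfying this identity.

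$(2)\Rightarrow(3)$ is trivial, since every quasicontinuous function from $X$ to the linear topological space $Y$ is hyperplane minimal.

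$(3)\Rightarrow(1)$. Let $f$ be a densely defined hyperplane minimal, subcontinuous function with values in $Y$ such that $\overline{co}\,\overline f(x)=F(x)$ for every $x\in X$. As in the proof of Theorem~2.4 the subcontinuity of $f$ implies that $\overline f$ is usco; in particular $\overline f(x)$ is a nonempty compact set for every $x\in X$. Since the closed convex hull of a compact set in $Y$ is compact, $\overline{co}\,\overline f(x)=F(x)$ is a nonempty compact set for every $x\in X$. Moreover $f$ is a selection of $F$, because for $x\in dom\,f$ we have $f(x)\in\overline f(x)\subset\overline{co}\,\overline f(x)=F(x)$. Thus $F$ has nonempty compact values and $f$ is a densely defined hyperplane minimal, subcontinuous selection of $F$ with $\overline{co}\,\overline f(x)=F(x)$ for every $x\in X$, which is condition (3) of Theorem~2.5. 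By Theorem~2.5 $(3)\Rightarrow(1)$, $F$ is minimal cusco.
\end{proof}
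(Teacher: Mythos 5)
Your proof is correct. The paper itself states this theorem as a ``variant'' of Theorem 2.5 and offers no proof, and your reduction --- using the hypothesis that closed convex hulls of compact sets in $Y$ are compact, together with the usco-ness of $\overline f$ obtained from subcontinuity as in Theorem 2.4, to recover the compact-values hypothesis of Theorem 2.5 --- is exactly the argument the authors evidently intend.
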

\bigskip

Notice that Theorem 2.14 in [BZ1] is an easy consequence of our Theorem 2.6. The function $f: G \to \Bbb R$ from Lemma 2.13 in [BZ1] is defined on a dense $G_\delta$ set $G$ of a topological space $T$. It is easy to verify that $f$ is subcontinuous. Since $f$ is continuous on $G$, by our Theorem 2.6 the map $x \rightarrow co \overline f(x)$ (for every $x \in T$) is minimal cusco. Of course for $\Phi$ in Theorem 2.14 in [BZ1] we have $\Phi(x) = co \overline f(x)$ for every $x \in T$.
\bigskip
\bigskip
\bigskip

\begin{remark} Let $X$ be a Baire space and $F: X \to  \Bbb R$ be cusco. Let $f: X \to \Bbb R$ be a function defined as $f(x) = inf\{t \in \Bbb R: t \in F(x)\}$ for $x \in X$. Using Remark 2.2 and our Theorem 2.6 we see that the map $x \rightarrow co \overline{f \upharpoonright C(f)}(x)$ is a minimal cusco map contained in $F$.

Similarly, if $h: X \to \Bbb R$ is a function defined as $h(x) = sup\{t \in \Bbb R: t \in F(x)\}$ for $x \in X$ then the map $x \rightarrow co$$\overline{h \upharpoonright C(h)}(x)$ is a minimal cusco map contained in $F$.

\end{remark}
\bigskip
\section{Minimal cusco maps and extreme functions}

\bigskip
Let $B$ be a subset of a linear topological space. By $\mathcal E(B)$ we denote the set of all extreme points of $B$.

Let $X$ be a topological space and $Y$ be a Hausdorff locally convex (linear topological) space. Let $F: X \to Y$ be a map with nonempty compact values. Then a selection $f$ of $F$ such that $f(x) \in \mathcal E(F(x))$ for every $x \in X$ is called an extreme function of $F$. (By Corollary 7.66 in [AB] every nonempty compact subset of a Hausdorff locally convex (linear topological) space has an extreme point. The hypothesis of local convexity cannot be dispensed. [AB], page 298)

\bigskip

\begin{lemma} Let $X$ be a topological space and $Y$ be a Hausdorff locally convex (linear topological) space. Let $F: X \to Y$ be a minimal cusco map and $G: X \to Y$ be a minimal usco map such that $G \subset F$. Then $\mathcal E(F(x)) \subset G(x)$ for every $x \in X$.

\end{lemma}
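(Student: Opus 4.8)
The plan is to argue by contradiction, fixing $x_0 \in X$ and a point $e \in \mathcal{E}(F(x_0))$ with $e \notin G(x_0)$. Since $G(x_0)$ is a nonempty compact convex subset of the Hausdorff locally convex space $Y$ and $e \notin G(x_0)$, the Hahn--Banach separation theorem gives a continuous linear functional $h : Y \to \mathbb{R}$ and $\lambda \in \mathbb{R}$ with $h(y) < \lambda$ for all $y \in G(x_0)$ and $h(e) > \lambda$. The upper semicontinuity of $G$ yields $U \in \mathcal{U}(x_0)$ with $G(U) \subset \{y : h(y) < \lambda\}$.

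Next I would exploit minimality of $F$ via the hyperplane-minimality characterization recalled before Theorem 2.2: $F$ is hyperplane minimal, so there must be some constraint coming from the fact that $F(U)$ meets the open half-space $W = \{y : h(y) > \lambda\}$ (it does, since $e \in F(x_0)$). Hyperplane minimality then produces a nonempty open $V \subset U$ with $F(V) \subset W$, i.e. $h(y) > \lambda$ for every $y \in F(x)$ and every $x \in V$. But $G \subset F$, so for $x \in V$ we get $G(x) \subset F(x) \subset \{y : h(y) > \lambda\}$, while also $G(x) \subset G(U) \subset \{y : h(y) < \lambda\}$. Since $V \neq \emptyset$ and $G$ has nonempty values, this is the desired contradiction.

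The one point that needs care — and I expect it to be the main obstacle — is justifying that hyperplane minimality of $F$ really does apply here to force $F(V) \subset W$ on a small open set: the definition requires $F(U') \cap W \neq \emptyset$ for an open set $U'$, and we have this for $U' = U$ since $e \in F(x_0) \subset F(U)$ and $h(e) > \lambda$. An alternative, perhaps cleaner, route avoiding the role of $e$ being extreme until the very end: one could instead observe that the set $M(x) = F(x) \cap \{y : h(y) \le \mu\}$ for a suitable $\mu \in (\lambda, h(e))$, together with the ``trimming'' construction, yields a cusco map strictly contained in $F$ unless $F(x_0) \subset \{h \le \mu\}$, contradicting $h(e) > \mu$; but the extremality of $e$ is what prevents $F(x_0)$ from being split, and this is cleanest if we simply use separation of $e$ from the compact convex $G(x_0)$ as above. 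I would therefore present the contradiction argument in the first form, using hyperplane minimality of $F$ and upper semicontinuity of $G$, and remark that extremality of $e$ was used only to guarantee, via the separation theorem applied to the compact convex set $G(x_0)$, that a single functional $h$ strictly separates $e$ from all of $G(x_0)$.
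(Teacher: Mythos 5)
There is a genuine gap, and it occurs at the very first step. You assert that $G(x_0)$ is a nonempty compact \emph{convex} subset of $Y$ and then strictly separate $e$ from it by a continuous linear functional. But $G$ is only a minimal \emph{usco} map, so $G(x_0)$ is compact and in general not convex. Worse, the separation you want is impossible in principle: since $G \subset F$ and $F$ is minimal cusco, Proposition 2.7 of [BZ1] gives $F(x_0) = \overline{co}\, G(x_0)$, so the extreme point $e \in F(x_0)$ lies in the closed convex hull of $G(x_0)$. Any closed half-space $\{y : h(y) \le \lambda\}$ containing $G(x_0)$ is closed and convex, hence contains $\overline{co}\, G(x_0)$ and therefore $e$; so no $h$ with $h < \lambda$ on $G(x_0)$ and $h(e) > \lambda$ can exist. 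The rest of your argument (hyperplane minimality of $F$, upper semicontinuity of $G$) is therefore never reached from a valid starting point. Your closing remark that extremality of $e$ is what licenses the separation is exactly where the proof breaks: extremality of $e$ in $\overline{co}\, G(x_0)$ does not let you separate $e$ from $G(x_0)$; what it does give you — and this is the real content of the lemma — is that $e$ must already belong to $G(x_0)$.

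The paper's proof is short and pointwise: $F(x) = \overline{co}\, G(x)$ by Proposition 2.7 in [BZ1], and then Milman's theorem (the partial converse of Krein--Milman: every extreme point of $\overline{co}\, K$, for $K$ compact, lies in $K$; Theorem 2.10.15 in [Me]) immediately yields $\mathcal E(F(x)) \subset G(x)$. Milman's theorem is proved not by a single hyperplane separation but by covering $K$ by finitely many small convex pieces and using extremality to locate $e$ in one of them; some argument of this finer type is unavoidable here, and your sketched alternative with the trimmed map $M(x) = F(x) \cap \{h \le \mu\}$ does not supply it either. To repair your write-up, replace the separation step by an appeal to $F(x) = \overline{co}\, G(x)$ together with Milman's theorem.
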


\begin{proof} Let $x \in X$. By Proposition 2.7 in [BZ1] we have that $F(x) = \overline{co} G(x)$ for every $x \in X$. By Theorem 2.10.15 in [Me] which was proved by D.P. Milman in his paper [Mi]  every extreme point of $\overline{co} G(x)$ is contained in $G(x)$. Thus $\mathcal E(F(x)) \subset G(x)$ for every $x \in X$.

\end{proof}

\bigskip

\begin{theorem} Let $X$ be a topological space and $Y$ be a Hausdorff locally convex  (linear topological)  space. Let $F: X \to Y$ be a map. The following are equivalent:

(1) $F$ is a minimal cusco map;

(2) $F$ has nonempty compact, convex values, $F$ has a closed graph, every extreme function of $F$ is quasicontinuous, subcontinuous and any two extreme functions of $F$ have the same closures of their graphs;

(3) $F$ has nonempty compact values, every extreme function $f$ of $F$ is quasicontinuous, subcontinuous   and $F(x) = \overline{co}$$ \overline f(x)$ for every $x \in X$.

\end{theorem}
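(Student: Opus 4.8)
The plan is to prove the cycle $(1)\Rightarrow(2)\Rightarrow(3)\Rightarrow(1)$, leaning heavily on Theorem 2.2 and Lemma 3.1.

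First, for $(1)\Rightarrow(2)$: assume $F$ is minimal cusco. By definition it has nonempty compact convex values, and since it is usco it has a closed graph. Let $G\subset F$ be a minimal usco map (guaranteed by Kuratowski--Zorn, since any selection of $F$ has a closure that is usco and sits inside $F$). I would take any extreme function $f$ of $F$. By Lemma 3.1, $\mathcal E(F(x))\subset G(x)$ for every $x$, so $f$ is in fact a selection of the minimal usco map $G$. By Theorem 2.1, $f$ is then quasicontinuous, subcontinuous, and $\overline f=G$. Applying this to two extreme functions $f_1,f_2$ gives $\overline{f_1}=G=\overline{f_2}$; but one subtlety is that Lemma 3.1 refers to ``a'' minimal usco $G\subset F$, so to conclude $\overline{f_1}=\overline{f_2}$ I must note that any minimal usco inside $F$ has the same closed convex hull $F$ (Proposition 2.7 in [BZ1]), hence both $\overline{f_1}$ and $\overline{f_2}$ are minimal usco maps whose closed convex hull is $F$; and since each is the closure of an extreme selection, it is cleanest to fix one such $G$ and observe every extreme function is a selection of \emph{that} $G$, forcing all their closures to equal $G$. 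This yields (2).

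Next, $(2)\Rightarrow(3)$: given (2), pick any extreme function $f$ of $F$ (one exists by Corollary 7.66 in [AB] since $F$ has nonempty compact values in a Hausdorff locally convex space, applied pointwise with the axiom of choice). By hypothesis $f$ is quasicontinuous and subcontinuous, so $\overline f$ is usco by [HN], and $\overline f\subset F$ because $F$ has a closed graph. Then $\overline{co}\,\overline f(x)\subset F(x)$ for each $x$, and both are compact convex. The reverse inclusion is where I expect the main work: I want $\overline{co}\,\overline f(x)=F(x)$. I would argue that $\overline{co}\,\overline f$ is a cusco map (Lemma 2.1, using compactness of $F(x)\supset\overline{co}\,\overline f(x)$) contained in $F$; if $F$ were known minimal we would be done, but at this stage we only have (2). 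So instead I use the extreme-point structure: if $\overline{co}\,\overline f(x_0)\subsetneq F(x_0)$ for some $x_0$, then $F(x_0)$ has an extreme point $y_0\notin\overline{co}\,\overline f(x_0)$; extend $y_0$ at $x_0$ to a full extreme function $g$ of $F$ (choosing extreme points of $F(x)$ elsewhere). Then $g$ is an extreme function with $g(x_0)=y_0$, so $y_0\in\overline g=\overline f$ (the two extreme functions share graph closures), hence $y_0\in\overline f(x_0)\subset\overline{co}\,\overline f(x_0)$, a contradiction. This gives $F(x)=\overline{co}\,\overline f(x)$, and since the chosen $f$ is already quasicontinuous and subcontinuous, and every extreme function has the same graph closure as $f$ and hence is quasicontinuous and subcontinuous too, (3) follows.

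Finally, $(3)\Rightarrow(1)$: given (3), take any extreme function $f$ (exists by [AB] Corollary 7.66 pointwise). It is quasicontinuous hence hyperplane minimal, subcontinuous, a selection of $F$, and $\overline{co}\,\overline f(x)=F(x)$ for all $x$, with $F$ having nonempty compact values. These are exactly the hypotheses of condition (3) of Theorem 2.2, so $F$ is minimal cusco. The main obstacle in the whole argument is the reverse inclusion step in $(2)\Rightarrow(3)$: turning the combinatorial fact ``all extreme functions have the same graph closure'' plus the Milman-type converse (every extreme point of a closed convex hull of a compact set lies in that set) into the identity $F(x)=\overline{co}\,\overline f(x)$ requires carefully invoking Corollary 7.66 and Milman's theorem (Theorem 2.10.15 in [Me]) to produce and place the offending extreme point, and checking that the pointwise selection of extreme points genuinely assembles into a function that qualifies as an extreme function of $F$.
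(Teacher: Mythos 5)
Your proposal is correct and follows essentially the same route as the paper: Lemma 3.1 plus Theorem 2.1 for $(1)\Rightarrow(2)$, producing via Krein--Milman an extreme point of $F(x_0)$ outside $\overline{co}\,\overline f(x_0)$ and extending it to an extreme function to contradict the shared graph closures for $(2)\Rightarrow(3)$, and reduction to the earlier characterization for $(3)\Rightarrow(1)$. Your minor variations (fixing one minimal usco $G$ instead of invoking its uniqueness, deriving the contradiction directly from $(x_0,y_0)\in\overline g=\overline f$ without the open-set separation, and citing Theorem 2.2(3) at the end) are all sound and, if anything, slightly streamline the paper's argument.
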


\begin{proof} $(1) \Rightarrow (2)$ Of course, $F$ has to have nonempty compact, convex values and $F$ has to have a closed graph. Let $f$ be an extreme function of $F$. We will show that $f$ is quasicontinuous and subcontinuous. Let $G$ be a minimal usco map contained in $F$ (there is a unique minimal usco map contained in $F$ by Theorem 4.1). By Lemma 3.1 we have $\mathcal E(F(x)) \subset G(x)$ for every $x \in X$. Since $f(x) \in \mathcal E(F(x))$ for every $x \in X$,  $f$ is a selection of $G$. By Theorem 2.1 $f$ must be quasicontinuous, subcontinuous and $\overline f = G$. Thus every two extreme functions have to have the same closures of their graphs.

$(2) \Rightarrow (3)$  Let $f$ be an extreme function of $F$. (Such a function exists, for $F(x)$ is a nonempty compact set for every $x \in X$.) Since $f$ is quasicontinuous and subcontinuous, $\overline f$ is a minimal usco map by Theorem 2.1 and $\overline f \subset F$. We claim that $F(x) = \overline{co}$$\overline f(x)$ for every $x \in X$.

Suppose there is $(x,y) \in X \times Y$ such that $y \in F(x) \setminus \overline{co}$$ \overline f(x)$.
Without loss of generality we can suppose that $y \in \mathcal E(F(x))$, since by Krein-Milman theorem a compact convex set is the closed convex hull of its extreme points. Since $y \notin \overline{co}$$ \overline f(x)$, there are two open and disjoint sets $O_1, O_2$ in $Y$ such that

\centerline{$\overline{co}$$ \overline f(x) \subset O_1$    and     $y \in O_2$.}

Let $U \in \mathcal U(x)$ be such that $\overline f(U) \subset O_1$. Let $g$ be an extreme function of $F$ such that $g(x) = y$, a contradiction with the fact that every two extreme functions of $F$ have  the same closures of their graphs.

$(3) \Rightarrow (1)$ To prove that $F$ is a minimal cusco map, let $f$ be an extreme function of $F$. Since $f$ is quasicontinuous and subcontinuous, by Theorem 2.1 $\overline f$ is minimal usco and by Lemma 2.1 and Proposition 2.7 in [BZ] a map $x\rightarrow \overline{co}$$ \overline f(x)$ is minimal cusco. Since $F(x) = \overline{co}$$ \overline f(x)$ for every $x \in X$, we are done.

\end{proof}
\bigskip

Let $F \subset X \times \Bbb R$ such that $F(x)$ is a nonempty bounded set for every $x \in X$. Then there are two real-valued functions $sup F$ and $inf F$ defined on $X$ by $sup F(x) = sup \{t \in \Bbb R: t \in F(x)\}$ and $inf F(x) = inf \{t \in \Bbb R: t \in F(x)\}$.

\bigskip

\begin{theorem} Let $X$ be a topological space and $F: X \to \Bbb R$ be a map. The following are equivalent:

(1) $F$ is a minimal cusco map;

(2) $F$ has nonempty compact, convex values, $F$ has a closed graph, $sup F$ and $inf F$ are quasicontinuous, subcontinuous functions and $\overline{sup F} = \overline{inf F}$;

(3) $F$ has nonempty compact  values, $sup F$ and $inf F$ are quasicontinuous, subcontinuous functions
and $F(x) = co$ $  \overline{sup F}(x) = co$ $\overline{inf F}(x)$.
\end{theorem}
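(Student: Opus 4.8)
The plan is to reduce Theorem 3.3 to Theorem 3.2 by identifying, for a map $F : X \to \Bbb R$ with nonempty compact convex (hence interval) values, the extreme functions of $F$ with the two boundary selections $\inf F$ and $\sup F$. Indeed, for each $x$ the set $F(x)$ is a compact interval $[\inf F(x), \sup F(x)]$, so its extreme points are exactly its two endpoints (or the single point when $F(x)$ is a singleton). Hence any extreme function $f$ of $F$ satisfies $f(x) \in \{\inf F(x), \sup F(x)\}$ for every $x$, and conversely both $\inf F$ and $\sup F$ are themselves extreme functions. This dictionary is what lets us translate the conditions "every extreme function is quasicontinuous and subcontinuous" and "any two extreme functions have the same closure of graph" into statements about $\inf F$ and $\sup F$ alone.

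First I would prove $(1) \Rightarrow (2)$: if $F$ is minimal cusco then by Theorem 3.2 $(1)\Rightarrow(2)$, $F$ has nonempty compact convex values, a closed graph, and every extreme function — in particular $\sup F$ and $\inf F$ — is quasicontinuous and subcontinuous, and any two extreme functions have the same graph closure, so in particular $\overline{\sup F} = \overline{\inf F}$. For $(2) \Rightarrow (3)$, I would first observe that any extreme function $f$ of $F$ has graph sandwiched between those of $\inf F$ and $\sup F$ in the sense that $\overline{\inf F} \subset \overline f$ is false in general, but rather that $\overline{\sup F} = \overline{\inf F}$ forces $\overline f = \overline{\sup F} = \overline{\inf F}$ for every extreme function $f$ (since $f(x)$ always lies in $[\inf F(x),\sup F(x)]$ and both endpoints have the common graph closure, the closed graph of $f$ is squeezed to equal it); one then checks that this common value is quasicontinuous and subcontinuous, so by Theorem 3.2 $(2)\Rightarrow(3)$ we get $F(x) = \overline{co}\,\overline f(x)$, and since over $\Bbb R$ the closed convex hull of the compact set $\overline f(x)$ is just $co\,\overline f(x) = [\min \overline f(x), \max \overline f(x)]$, and $\overline f = \overline{\sup F} = \overline{\inf F}$, we obtain $F(x) = co\,\overline{\sup F}(x) = co\,\overline{\inf F}(x)$.

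For $(3) \Rightarrow (1)$, I would note that $\sup F$ (say) is a quasicontinuous subcontinuous function with $co\,\overline{\sup F}(x) = \overline{co}\,\overline{\sup F}(x) = F(x)$; since $\Bbb R$ is a Hausdorff locally convex space, Theorem 2.2 $(3)\Rightarrow(1)$ (or directly Theorem 3.2 $(3)\Rightarrow(1)$ with $f = \sup F$, which is an extreme function of $F$) yields that $F$ is minimal cusco. The only slightly delicate point throughout is the claim in $(2)\Rightarrow(3)$ that $\overline{\sup F} = \overline{\inf F}$ forces every extreme function to have this same graph closure; I expect this to be the main obstacle, and I would handle it by the squeezing argument: for a point $(x,y)$ with $y$ a cluster value of an extreme function $f$ along a net $x_i \to x$, one has $\inf F(x_i) \le f(x_i) \le \sup F(x_i)$, and passing to a subnet along which all three converge (using subcontinuity of $\inf F$ and $\sup F$) places $y$ between a point of $\overline{\inf F}(x)$ and a point of $\overline{\sup F}(x)$; since these coincide and $F(x) = co$ of that common compact set, one deduces $(x,y) \in \overline f \subset F$ and, in the other direction, that $\overline{\sup F}(x) \subset \overline f(x)$ because the endpoints themselves arise as limits of $f$-values along suitable nets. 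Once this is in place the rest is bookkeeping with Theorem 3.2 and the interval structure of convex subsets of $\Bbb R$.
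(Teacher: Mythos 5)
Your proposal is correct in substance, but it routes $(2)\Rightarrow(3)$ differently from the paper. The paper argues directly: assuming $y\in F(x)\setminus co\,\overline{\sup F}(x)$, it uses the upper semicontinuity of $z\mapsto co\,\overline{\sup F}(z)$ (which is usco by Theorem 2.1 and Lemma 2.1) to push $co\,\overline{\sup F}(z)$ above $y+\epsilon$ on a neighborhood of $x$, and then derives a contradiction from $\overline{\sup F}=\overline{\inf F}$ together with $\inf F(x)\le y$; no appeal to Theorem 3.2 is made at this step. You instead reduce to Theorem 3.2 via the dictionary ``extreme functions of $F$ are exactly the selections taking values in $\{\inf F(x),\sup F(x)\}$,'' which is a clean and arguably more conceptual reduction: since $\inf F(x)$ and $\sup F(x)$ both lie in $G(x)$ where $G=\overline{\sup F}=\overline{\inf F}$, every extreme function is a selection of the minimal usco map $G$, and Theorem 2.1 (implication $(1)\Rightarrow(3)$ there) immediately gives that it is quasicontinuous, subcontinuous, and has graph closure $G$ --- which is precisely condition (2) of Theorem 3.2. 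I would recommend phrasing the ``delicate point'' this way rather than via your net-squeezing sketch, which as written is loose (being ``between a point of $\overline{\inf F}(x)$ and a point of $\overline{\sup F}(x)$'' does not by itself place the cluster value in the common graph closure, and an extreme function need not have closed graph); the selection-of-$G$ argument makes the step immediate. Your $(1)\Rightarrow(2)$ and $(3)\Rightarrow(1)$ agree with the paper's, both being direct applications of Theorem 3.2, Theorem 2.1 and Lemma 2.1 (plus the observation that $\overline{co}=co$ for compact subsets of $\Bbb R$).
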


\begin{proof}
$(1) \Rightarrow (2)$ is clear from the above Theorem. $(2) \Rightarrow (3)$ We will prove that $F(x) = co$ $\overline{sup F}(x)$ for every $x \in X$.
Suppose there is $(x,y) \in X \times \Bbb R$ such that $y \in F(x) \setminus co$ $\overline{sup F}(x)$. Let $\epsilon > 0$ be such that
\bigskip

\centerline{$(y - 2\epsilon,y + 2\epsilon) \cap co$ $\overline{sup F}(x) = \emptyset$.}
\bigskip

The upper semicontinuity of $z \rightarrow co$ $\overline{sup F}(z)$ at $x \in X$ implies that there is $U \in \mathcal U(x)$ such that $co$ $\overline{sup F}(z) \subset (y + \epsilon,\infty)$ for every $z \in U$. Since $\overline{sup F} = \overline{inf F}$ and $inf F(x) \le y < y + \epsilon$, we have a contradiction.

$(3) \Rightarrow (1)$ By Theorem 2.1 $\overline{sup F}$ is minimal usco. By Lemma 2.1 and Proposition 2.7 in [BZ1] the map $x \rightarrow co $ $\overline{sup F}(x)$ is minimal cusco, so we are done.

\end{proof}
\bigskip

It is interesting to note that Theorem 2.14 in [BZ1] follows also from our Theorem 3.2. In fact, let $f$ be a function from Lemma 2.13 in [BZ1]. Let $H = \overline f$ be the closure of the graph of $f$. Then for $\Phi$ in Theorem 2.14 we have $\Phi(t) = [inf H(t),sup H(t)]$. Since $\overline{inf H} = \overline f = \overline{sup H}$ and $inf H$,  $sup H$ are quasicontinuous and subcontinuous, $\Phi$ is minimal cusco. It is clear that $H(x) = \{f(x)\}$ at every $x \in G$.
\bigskip
\bigskip
\section{Minimal cusco and minimal usco}
\bigskip
\begin{theorem} Let $X$ be a topological space and $Y$ be a Hausdorff locally convex (linear topological) space. Let $F: X \to Y$ be a minimal cusco map. There is a unique minimal usco map contained in $F$.

\end{theorem}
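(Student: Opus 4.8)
The plan is to prove existence and uniqueness separately, in both cases exploiting the correspondence between minimal usco maps contained in $F$ and quasicontinuous subcontinuous selections established in Theorems 2.1 and 2.2. For existence, I would start from the characterization $(1) \Rightarrow (2)$ of Theorem 2.2: since $F$ is minimal cusco, there is a quasicontinuous, subcontinuous selection $f$ of $F$ with $\overline{co}\,\overline f(x) = F(x)$ for every $x \in X$. By Theorem 2.1 the map $G := \overline f$ is then a minimal usco map, and $G \subset \overline{co}\,\overline f = F$, so a minimal usco map contained in $F$ exists.

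For uniqueness, suppose $G_1$ and $G_2$ are both minimal usco maps contained in $F$. I would pick any selection $f_i$ of $G_i$; by Theorem 2.1 each $f_i$ is quasicontinuous and subcontinuous with $\overline{f_i} = G_i$, and each is a selection of $F$. The key is Proposition 2.7 in [BZ1], already invoked in the proof of Theorem 2.2 and in Lemma 3.1: it gives $F(x) = \overline{co}\,\overline{f_i}(x) = \overline{co}\,G_i(x)$ for every $x \in X$. Thus $\overline{co}\,G_1(x) = \overline{co}\,G_2(x) = F(x)$ for all $x$. Now I would apply the Milman theorem (Theorem 2.10.15 in [Me], as used in Lemma 3.1): every extreme point of the compact convex set $F(x) = \overline{co}\,G_i(x)$ lies in $G_i(x)$, so $\mathcal E(F(x)) \subset G_1(x) \cap G_2(x)$. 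To upgrade this from extreme points to the whole of $G_i$, I would form the map $H(x) := G_1(x) \cap G_2(x)$. One checks $H$ has nonempty values (it contains $\mathcal E(F(x)) \ne \emptyset$ by Corollary 7.66 in [AB]), compact values, and a closed graph since $G_1, G_2$ are usco; an intersection-of-uscos argument (standard: the intersection of an usco map with a closed-graph compact-valued map with nonempty values is usco) shows $H$ is usco. Then $H \subset G_1$ and $G_1$ minimal usco forces $H = G_1$; symmetrically $H = G_2$; hence $G_1 = G_2$.

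The main obstacle I anticipate is the step verifying that $H = G_1 \cap G_2$ is actually usco (upper semicontinuous), rather than merely closed-graph and compact-valued — closed graph plus compactness of the ambient does not immediately give upper semicontinuity unless one knows $H$ sits inside an usco map, which here it does ($H \subset G_1$), so the real content is the routine lemma that a closed-graph submap of an usco map with nonempty values is again usco. An alternative route that sidesteps this is to argue directly at the level of selections: given a selection $g$ of $G_1$, it is a selection of $F$, and I would show $g$ is also a selection of $G_2$ by separating, for a putative point $g(x) \notin G_2(x)$, the closed convex set $\overline{co}\,G_2(x) = F(x)$ from $g(x)$ with a continuous linear functional — but $g(x) \in G_1(x) \subset F(x)$, contradiction — so every selection of $G_1$ is a selection of $G_2$, whence $G_1 \subset G_2$, and by symmetry $G_1 = G_2$. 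I would likely present this second, cleaner argument as the main proof and relegate the intersection viewpoint to a remark.
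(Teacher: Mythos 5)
Your first (``intersection'') argument is correct and takes a genuinely different route from the paper. The paper also reduces uniqueness to showing $G_1(x)\cap G_2(x)\ne\emptyset$, but it proves nonemptiness by separating $S_{p,\epsilon}(G_1(x))$ from $S_{p,\epsilon}(G_2(x))$ with a seminorm and then using upper semicontinuity together with the quasicontinuity of a selection $h$ of $G_2$ to force $F(z)=\overline{co}\,\overline h(z)$ into $S_{p,\epsilon}(h(x))$ on a nonempty open set, contradicting $g(z)\in G_1(z)$. You instead get nonemptiness at once from Proposition 2.7 in [BZ1] ($F(x)=\overline{co}\,G_i(x)$) plus Milman's theorem ($\mathcal E(F(x))\subset G_i(x)$), which is exactly the content of the paper's Lemma~3.1 and is not circular here since that lemma does not use Theorem~4.1. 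Your route is shorter and avoids the seminorm bookkeeping; the paper's route avoids invoking Milman and the existence of extreme points, staying entirely within the usco/quasicontinuity toolkit of Section~2. The remaining step you flag --- that a nonempty-valued, closed-graph, compact-valued submap of an usco map is usco --- is indeed the standard lemma and poses no problem.

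However, the ``second, cleaner argument'' that you say you would present as the main proof has a genuine gap. If $g$ is a selection of $G_1$ and $g(x)\notin G_2(x)$, you propose to separate $g(x)$ from the closed convex set $\overline{co}\,G_2(x)=F(x)$ by a continuous linear functional. But separation requires $g(x)\notin\overline{co}\,G_2(x)$, and the hypothesis $g(x)\notin G_2(x)$ does not give this: $g(x)\in G_1(x)\subset F(x)=\overline{co}\,G_2(x)$ always holds, so there is nothing to separate and no contradiction is produced. (Concretely, $G_2(x)$ may be a non-convex compact set and $g(x)$ a non-extreme point of its closed convex hull.) The argument as written only re-derives the trivial inclusion $G_1(x)\subset\overline{co}\,G_2(x)$ and cannot yield $G_1\subset G_2$. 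You should promote the intersection argument to be the actual proof and drop the selection-separation version.
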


\begin{proof}  Let $G, H$ be two minimal usco maps contained in $F$. It is sufficient to prove that $G(x) \cap H(x) \ne \emptyset$ for every $x \in X$.
Then a map $L: X \to Y$ defined as $L(x) = G(x) \cap H(x)$ for every $x \in X$ is usco and $L \subset G$, $L \subset H$. Thus $G = L = H$.

Let $\tau$ be a Hausdorff locally convex topology on $Y$ and $\Gamma$ be a system of seminorms on $X$ which generate $\tau$. For every $x \in X$, every $p \in \Gamma$ and $\epsilon > 0$ we denote

\bigskip
\centerline{$S_{p,\epsilon}(x) = \{y \in Y: p(x - y) < \epsilon\}$ and $S_{p,\epsilon}(A) = \cup_{a \in A} S_{p,\epsilon}(a)$.}
\bigskip

Suppose there is $x \in X$ such that $G(x) \cap H(x) = \emptyset$. Since $G(x), H(x)$ are compact sets, there is a seminorm $p$ and $\epsilon > 0$ such that
\bigskip

\centerline{$S_{p,\epsilon}(G(x)) \cap S_{p,\epsilon}(H(x)) = \emptyset.$}
\bigskip

The upper semicontinuity of $G$ and $H$ implies that there is $U \in \mathcal U(x)$ such that

\bigskip
\centerline{$G(z) \subset S_{p,\epsilon}(G(x))$ and $H(z) \subset S_{p,\epsilon}(H(x))$}
\bigskip
for every $z \in U$. Let $g \subset G$ and $h \subset H$ be selections of $G, H$ respectively. The quasicontinuity of $h$ at $x$ implies that there is a nonempty open set $V \subset U$ such that

\bigskip
\centerline{$h(V) \subset S_{p,\epsilon/2}(h(x)) \subset \overline{S_{p,\epsilon/2}(h(x))} \subset S_{p,\epsilon}(h(x))$.}

\bigskip
Thus $\overline h(V) \subset \overline{S_{p,\epsilon/2}(h(x))}$, i.e. $\overline{co}$$\overline h(z) \subset \overline{S_{p,\epsilon/2}(h(x))} \subset S_{p,\epsilon}(h(x)) $ for every $z \in V$. For every $z \in V$ we have $F(z) = \overline{co}$$\overline h(z) \subset S_{p,\epsilon}(h(x))$. Since $g(z) \in F(z)$, we have $g(z) \in S_{p,\epsilon}(h(x))$, a contradiction.
\end{proof}

\bigskip
\bigskip
Acknowledgement. The authors are thankful to M. Sleziak.

\vskip 1pc


\begin{thebibliography}{xxxxx}



\bibitem[AB]{1}Aliprantis,~Ch.D.---Border,~K.C.: \textit{Infinite dimensional analysis}, 3rd edition, Springer Verlag Berlin, 2006

\bibitem[Ba]{2}Baire,~R.: \textit{Sur les fonctions des variables reelles}, Ann. Math. Pura Appl. \textbf{3} (1899), 1--122.

\bibitem[Be]{3}Beer,~G.: \textit{Topologies on closed and closed convex sets}, Kluwer Academic Publishers, 1993.

\bibitem[Bo]{4}Bors\'{i}k,~J.:\textit{Points of continuity and quasicontinuity}, Central European Journal of Mathematics \textbf{8} (2010), 179-190.

\bibitem[BZ1]{4}Borwein,~J.M.---Zhu,~Q.J.: \textit{Multifunctional and functional analytic techniques in nonsmooth analysis}, in Nonlinear Analysis, Differential Equations and Control, F.H. Clarke and R.J. Stern, editors, Kluwer Academic Publishers, 1999.

\bibitem[BZ2]{}Borwein,~J.M.---Zhu,~Q.J.: \textit{Techniques of variational analysis}, Springer, 2005.

\bibitem[Ch]{5}Christensen,~J.P.R.: \textit{Theorems of Namioka and R.E. Johnson type for upper semicontinuous and compact valued mappings}, Proc. Amer. Math. Soc. \textbf{86}  (1982), 649--655.




\bibitem[DL]{6}Drewnowski,~L.---Labuda,~I.: \textit{On minimal upper semicontinuous compact valued maps}, Rocky Mountain Journal of Mathematics \textbf{20} (1990), 737--752.


\bibitem[En]{7}Engelking,~R.: \textit{General Topology}, PWN 1977.

\bibitem[Fu]{8}Fuller,~R.V.: \textit{Set of points of discontinuity}, Proc. Amer. Math. Soc. \textbf{38} (1973), 193--197.

\bibitem[GM]{9}Giles,~J.R.---Moors,~W.B.: \textit{A continuity property related to Kuratowski's index of non-compactness, its relevance to the drop property, and its implications for differentiability theory}, Journal of Mathematical Analysis and its Applications \textbf{178} (1993), 247--268.



\bibitem[HH]{6}Hol\'a,~\v L.---Hol\'y,~D.: \textit{Minimal usco maps, densely continuous forms and upper semicontinuous functions}, Rocky Mountain Journal of Mathematics \textbf{39} (2009), 545--562.

\bibitem[HHo]{}Hol\'a,~\v L.---Hol\'y,~D.: \textit{Pointwise convergence of quasicontinuous mappings and Baire spaces}, Rocky Mountain Journal of Mathematics \textbf{41}  (2011), 1883--1894.

\bibitem[HN]{7}Hol\'a,~\v L.---Novotn\'y,~B.: \textit{Subcontinuity of multifunctions}, Mathematica Slovaca, to appear.

\bibitem[HP]{8}Hol\'a,~\v L.---Piotrowski,~Z.: \textit{Set of continuity points of functions with values in generalized metric spaces}, Tatra Mountains Mathematical Publications \textbf{42}  (2009), 148--160.



\bibitem[Ke]{}Kempisty,~S.: \textit{Sur les fonctions quasicontinues}, Fund. Math. \textbf{19} (1932),  189--197.


\bibitem[KKM]{8}Kenderov,~P.S.---Kortezov,~I.S.---Moors,~W.B.: \textit{Continuity points of quasi-continuous mappings}, Topology and its Applications, \textbf{109}  (2001), 321---346.

\bibitem[LL]{}Lechicki,~A.---Levi,~S.: \textit{Extensions of semicontinuous multifunctions}, Forum Math. \textbf{2}  (1990), 341--360.


\bibitem[Ma]{9}Matejdes,~M.: \textit{Selection theorems and minimal mappings in a cluster setting}, Rocky Mountain Journal of Mathematics \textbf{41} (2011), 851-857.

\bibitem[Me]{}Megginson,~R.E.: \textit{An introduction to Banach space theory}, Springer, 1998.

\bibitem[Mi]{}Milman,~D.P.: \textit{Characteristics of extremal points of regularly convex sets}, Dokl. Akad. Nauk SSSR \textbf{57} (1947), 119-122.

\bibitem[Ne]{10}Neubrunn,~T.: \textit{Quasi-continuity} Real Anal. Exchange \textbf{14} (1998), 259--306.






\bibitem[No]{}Novotn\'y,~B.: \textit{On subcontinuity} Real Anal. Exchange \textbf{31} (2005), 535-545.


\bibitem[Ph]{}Phelps,~R. R.: \textit{Convex functions, monotone operators and differentiability}, Lecture Notes in Mathematics \textbf{1364}, Springer-Verlag, Berlin/Heidelberg/New York (1989).


\bibitem[Sp]{}Spakowski,~A.: \textit{Upper set-convergences and minimal limits}, preprint.

\bibitem[Wa]{}Wang,~X.: \textit{Asplund sets, differentiability and subdifferentiability of functions in Banach spaces}, Journal of Mathematical Analysis and Applications \textbf{323} (2006), 1417-1429.

\end{thebibliography}
\end{document}